\theoremstyle{plain}
\newtheorem{thm}{Theorem}
\newtheorem{lemma}[thm]{Lemma}
\newtheorem*{conj}{Infinite Log-concavity Conjecture}
\theoremstyle{definition}
\newtheorem{definition}[thm]{Definition}
\newtheorem{example}[thm]{Example}
\theoremstyle{remark}
\newtheorem*{question}{Question}
\newtheorem*{conjec}{Conjecture}
\newcommand{\Z}{\ensuremath \mathbb{Z}}
\newcommand{\calP}{\mathcal{P}}
\DeclareMathOperator{\sgn}{sgn}
\begin{document}

\title[Nonnegative minors of minor matrices]
{Nonnegative minors of minor matrices}

\author{David A. Cardon}
\address{Department of Mathematics, Brigham Young University, Provo, UT 84602}
\email{cardon@math.byu.edu}

\author{Pace P. Nielsen}
\address{Department of Mathematics, Brigham Young University, Provo, UT 84602}
\email{pace@math.byu.edu}

\keywords{infinite log-concavity, minor matrix, nonnegative matrix, planar network, real zeros}
\subjclass[2010]{Primary 05C21, Secondary 05C22, 05C30, 26C10}

\begin{abstract}
Using the relationship between totally nonnegative matrices and directed acyclic weighted planar networks, we show that $2\times 2$ minors of minor matrices of totally nonnegative matrices are also nonnegative.  We give a combinatorial interpretation for the minors of minor matrices in terms of the weights of families of paths in a network.
\end{abstract}
\maketitle

\section*{Introduction}

By attaching weights to the edges of a finite, directed, acyclic planar network we form the corresponding weight matrix.  This weight matrix encodes important information about the network.  For the types of networks relevant to this paper, a result of Lindstr\"om \cite[Lemma 1]{Lindstrom} shows that these matrices are totally nonnegative, i.e.\ any minor is a subtraction-free expression in the weights of the network.  In this paper we extend Lindstr\"om's argument by showing that $2\times 2$ minors of the minor matrices (defined in \S\ref{sec:weightmatrices}) of the weight matrix are also nonnegative.  Moreover, we show that these minors of the minor matrices will be subtraction-free expressions in the weights of the original network.

As an application of the main theorem of this paper we give an extension of a conjecture, independently made by McNamara and Sagan \cite[Conjecture 7.1]{MS} and R.\ P.\ Stanley, about infinite log-concavity.  To state their conjecture we introduce some of the relevant background.  Let $\{a_{n}\}_{n=0}^{\infty}$ be a sequence of nonnegative real numbers.  We say the sequence is \emph{log-concave} if the new sequence $\{b_{n}\}$ given by $b_{n}=a_{n}^{2}-a_{n-1}a_{n+1}$ still consists of nonnegative numbers, where $a_{-1}=0$.  If every iteration of this procedure creates another nonnegative sequence, then we say that the original sequence is \emph{infinitely log-concave}.  Notice that if a polynomial $\sum_{i=0}^{m}a_{i}x^{i}$ has only real negative roots, then the sequence $\{a_{n}\}_{n=0}^{\infty}$ (where $a_{n}=0$ if $n>m$) is nonnegative.  The statement is as follows:

\begin{conj}
If $\sum_{i=0}^{m}a_{i}x^{i}$ has only real negative roots then the polynomial $\sum_{i=0}^{n}(a_{i}^{2}-a_{i-1}a_{i+1})x^{i}$ also has only real negative roots.  In particular, the sequence $\{a_{n}\}$ is infinitely log-concave.
\end{conj}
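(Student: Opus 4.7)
My plan is to route the conjecture through the Aissen--Schoenberg--Whitney (ASW) characterization: a polynomial $p(x)=\sum_{i=0}^{m} a_i x^{i}$ with $a_i\geq 0$ has only real negative zeros if and only if the bi-infinite Toeplitz matrix $T_{a}$ whose $(i,j)$-entry is $a_{j-i}$ (extended by $a_k=0$ outside $\{0,\dots,m\}$) is totally nonnegative. Under this dictionary, it will suffice to show that whenever $T_a$ is totally nonnegative, the Toeplitz matrix $T_b$ built from the derived sequence $b_i=a_i^{2}-a_{i-1}a_{i+1}$ is also totally nonnegative; ASW applied in reverse then supplies the real-rootedness of $\sum b_i x^i$, and the ``in particular'' clause follows by induction on the number of iterations.

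The key structural observation I will exploit is that every entry of $T_b$ is itself a $2\times 2$ minor of $T_a$: namely $b_i = a_i^2 - a_{i-1}a_{i+1}$ is the determinant of the submatrix of $T_a$ on two consecutive rows and two consecutive columns. Consequently $T_b$ can be realized, after a re-indexing, as a submatrix of a minor matrix of $T_a$, so that every $k\times k$ minor of $T_b$ coincides with a $k\times k$ minor of the minor matrix of $T_a$. In parallel, Lindstr\"om's theorem lets me realize $T_a$ as the weight matrix of a directed acyclic planar network $N$, so that every minor of $T_a$ becomes a subtraction-free weighted count of path families in $N$ and every minor of the minor matrix becomes a polynomial in such counts.

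With this setup in place, I apply the main theorem of the paper in the case $k=2$: every $2\times 2$ minor of the minor matrix of $T_a$ is nonnegative, and is moreover a subtraction-free expression in the edge weights of $N$. Specializing to the Toeplitz arrangement gives $b_i^{2}-b_{i-1}b_{i+1}\geq 0$, establishing log-concavity of $\{b_i\}$, i.e., the first iteration of the infinite log-concavity claim, with an explicit combinatorial witness to carry forward.

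The main obstacle will be promoting nonnegativity of $2\times 2$ minors of minor matrices to nonnegativity of $k\times k$ minors for every $k\geq 3$. To actually conclude that $\sum b_i x^i$ has only real negative roots, rather than merely that $\{b_i\}$ is log-concave, I need $T_b$ to be totally nonnegative, which requires every $k\times k$ minor of the minor matrix of $T_a$ to be nonnegative. My proposed strategy is to extend the planar-network argument behind the main theorem to this setting: given a $k$-tuple of row-pairs and column-pairs, exhibit the corresponding minor of the minor matrix as a weighted count of suitably noncrossing $k$-tuples of path-pairs in an auxiliary network, most plausibly by an inductive construction on $k$ or by a sign-reversing involution generalizing the $k=2$ case handled in this paper. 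Once such an extension is in hand, $T_b$ is totally nonnegative, ASW supplies the real-rootedness of the derived polynomial, and the conjecture follows by iterating.
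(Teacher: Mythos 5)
The first thing to say is that the paper does not prove this statement at all: it records the conjecture as motivation and attributes its proof to Br\"and\'en, who used complex-analytic techniques on symmetric polynomials. So your proposal is not competing with an internal proof; it is attempting the combinatorial route that the paper itself describes in \S\ref{Section:Open} as an \emph{open problem}. Your reduction is the correct and intended one: by Aissen--Schoenberg--Whitney, real-rootedness of $\sum a_ix^i$ is equivalent to total nonnegativity of the Toeplitz matrix $T_a$, the derived matrix $T_b$ is exactly the $(\{0,1\},\{0,1\})$-minor matrix $\mathcal{L}(T_a)$, and Theorem~\ref{thm:MainTheorem} does legitimately give you the $2\times 2$ minors of $\mathcal{L}(T_a)$, hence log-concavity of $\{b_i\}$ (equivalently, nonnegativity of $\mathcal{L}^2(T_a)$). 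Up to that point your argument is sound and matches the paper's framing.

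The genuine gap is the step you yourself flag as the ``main obstacle,'' and it is not a technical loose end but the entire content of the conjecture. Your proposed strategy --- extend the sign-reversing involution from $2\times 2$ to $k\times k$ minors of the minor matrix --- is exactly what the paper shows \emph{cannot} work in the stated generality. Example~\ref{example:Order6CounterExample} exhibits a planar network of order $6$ whose $(A,B)$-minor matrix has a $3\times 3$ determinant equal to $-1$, and the discussion preceding it explains precisely why the involution breaks: with three or more colored path families, the sink-swapping operation is no longer a bijection, because two different pairs of families may both be switchable, so odd-sign terms can outnumber even-sign terms. Worse for your purposes, the example attributed to Talaska gives a totally nonnegative matrix $A$ with $\mathcal{L}(A)$ not totally nonnegative, so the implication ``$T_a$ TN $\Rightarrow$ $T_b$ TN'' that your ASW reduction requires is simply false for general TN matrices. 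Any correct proof along your lines must exploit the special Toeplitz/real-rooted structure of $T_a$ (the networks of Figure~\ref{fig:sequence-network}), and no one knows how to do that combinatorially; that is precisely the conjecture and question posed at the end of the paper. Finally, note that even if you obtained total nonnegativity of $T_b$ for one iteration, your closing induction needs the same (unproven) implication at every subsequent stage.
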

\noindent Petter Br\"{a}nd\'{e}n \cite{Branden} recently proved this conjecture, using complex-analytic techniques applied to symmetric polynomials.  We were led to our extension (which is stated in \S\ref{Section:Open}) by first noticing that the sequence $\{a_{n}\}$ gives rise to a totally nonnegative matrix $A$ and the infinite log-concavity conjecture would follow from the total nonnegativity of a certain matrix (which we call a minor matrix) formed from $A$ by taking successive minors.

\section{Planar Networks, Weight Matrices, and Minor Matrices}
\label{sec:weightmatrices}

A fundamental object of this paper is a special type of planar network called a planar network of order $n$, which we define below.  To this network is associated an $n \times n$ matrix called the weight matrix. In Theorem~\ref{thm:MainTheorem} we will show that certain matrices derived from the weight matrix, which we call minor matrices, satisfy an important nonnegativity property.

\begin{definition} \label{def:planarnetwork}
A \textit{planar network of order $n$} is a finite directed acyclic planar graph containing exactly $n$ sources and $n$ sinks, denoted  $s_1,\ldots,s_n$ and $t_1,\ldots,t_n$ respectively, which lie on the boundary. Furthermore, the sources and sinks are configured such that they may be  labeled in counterclockwise order as $s_1,\ldots,s_n,t_n,\ldots,t_1$. It will be assumed that the network is drawn with the sources $s_1,\ldots,s_n$ on the left and the sinks $t_1,\ldots,t_n$ on the right, with no vertical edges, and with the edges directed from left to right.  An example is given in Figure~\ref{fig:PlanarNetworkExample01}.  A non-example is given in Figure~\ref{fig:PlanarNetworkNotOrderN}; the planar network in that figure is not of order $n$ for any $n\geq 1$, because the sources and sinks cannot be ordered in the appropriate manner.
\end{definition}

\begin{figure}[ht]
\includegraphics{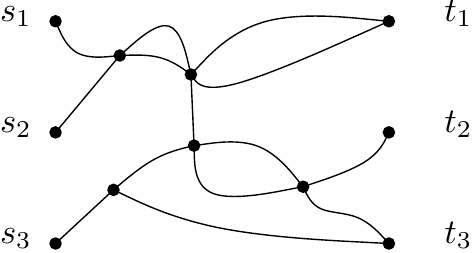}
\caption{An example of a planar network of order 3.  All edges are directed to the right.}
\label{fig:PlanarNetworkExample01}
\end{figure}

\begin{figure}[ht]
\includegraphics{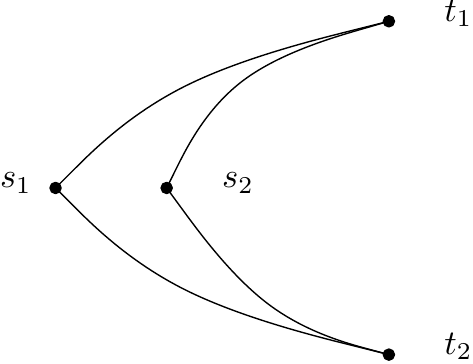}
\caption{An example of a directed, acyclic planar network, with an equal number of sources and sinks, which is not of order 2.}
\label{fig:PlanarNetworkNotOrderN}
\end{figure}

Given a planar network $\Gamma$ of order $n$ we assign indeterminates to each of the edges, which we think of as weights.  In applications, we may specialize these weights to be real numbers. An example of a planar network of order $3$, with weights, is given in Figure~\ref{fig:Weights}.

\begin{figure}[ht]
\includegraphics{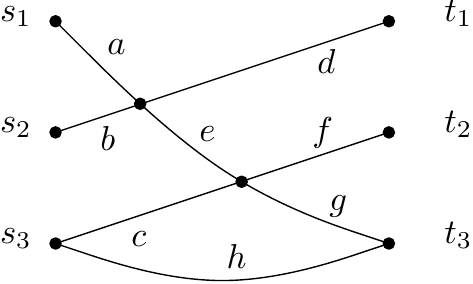}
\caption{A planar network of order $3$ with weights.}
\label{fig:Weights}
\end{figure}
By a \textit{path} $\pi$ in $\Gamma$ we mean a directed continuous curve in the network beginning at a source $s_i$ and terminating at a sink $t_j$.  A family of paths is \textit{vertex-disjoint} if no two paths from the family intersect.  The \textit{weight} of $\pi$, denoted $\omega(\pi)$, is the product of the weights of the edges of $\pi$.  For example, in Figure~\ref{fig:Weights}, there is only one path from $s_{1}$ to $t_{2}$, and it has weight $aef$.

\begin{definition} \label{def:weightmatrix}
The \textit{weight matrix} $W=W(\Gamma)$ of a planar network $\Gamma$ of order $n$ is the $n \times n$ matrix $W=(w_{i,j})$, where
\[
w_{i,j}=\sum_{\pi\in P_{i,j}} \omega(\pi)
\]
and $P_{i,j}$ is the set of paths from source $s_i$ to sink $t_j$.  By convention empty sums are $0$.
\end{definition}

\begin{example}\label{Example:WeightMatrix}
The planar network in Figure~\ref{fig:Weights} has weight matrix
\[
W=
\begin{pmatrix}
ad & aef & aeg \\
bd & bef & beg \\
0 & cf & cg+h
\end{pmatrix}.
\]
\end{example}

We are particularly interested in determinants of submatrices of these weight matrices.  To this end we introduce some notation to simplify the formation of arbitrary minors.  For any positive integer $k\in \Z$, we let $[k]=\{1,2,3,\ldots, k\}$.  Let $W=(w_{i,j})$ be any $m \times n$ matrix, and let $I\subseteq [m]$ and $J\subseteq [n]$ be sets of indices of equal cardinality.  Write
\begin{align*}
I & = \{i_1,\ldots,i_k\}, \text{ where } i_1< i_2 < \cdots < i_k, \text{ and}\\
J & = \{j_1,\ldots,j_k\}, \text{ where } j_1< j_2 < \cdots < j_k.
\end{align*}
Then by $W[I,J]$ we denote the $k \times k$ submatrix
\[
W[I,J] = (w_{i,j}), \qquad (i \in I,\ j \in J),
\]
with rows indexed by $I$ and columns indexed by $J$. The $(I,J)$-minor of $W$ is the determinant
\[
\det W[I,J] = \sum_{\sigma \in S_k} \sgn(\sigma) \prod_{\ell=1}^k w_{i_{\ell},j_{\sigma(\ell)}},
\]
where $S_k$ is the group of permutations of the set $[k]$. Recall that a matrix $W$ is \textit{totally nonnegative} (abbreviated TN) if each of its minors is nonnegative.

A well-known result due to  Lindstr\"om (which we give as Lemma~\ref{lemma:Lindstrom} below) is that the minors of the weight matrix $W$ of a planar network of order $n$ are subtraction-free expressions in terms of the weights of the network. Thus, when the weights are positive real numbers, the weight matrix is totally nonnegative.  For example, by direct computation one can verify that all minors of the
matrix in Example~\ref{Example:WeightMatrix} are subtraction-free expression in terms of the weights $a,b,c,\ldots,g,h$.

There are a number of different generalizations of Lindstr\"om's Lemma; for example, see the section on looped-erased walks in Postnikov~\cite{Postnikov2007}.  The main result of the paper, Theorem~\ref{thm:MainTheorem}, extends Lindstr\"om's Lemma from the weight matrix to another matrix, called the \textit{minor matrix}, whose definition is given below.

\begin{definition}
Let $A$ and $B$ be sets of equal cardinality $k$.  We write them, under the usual ordering of integers, in the form
\begin{align*}
A & = \{a_1,\ldots,a_k\} \subseteq \{0,1,2,\ldots,m-1\}, \text{ and} \\
B & = \{b_1,\ldots,b_k\} \subseteq \{0,1,2,\ldots,n-1\}.
\end{align*}

The \textit{$(A,B)$-minor matrix} $T=(t_{i,j})$ of an $m \times n$ matrix $W$ is the matrix whose entries are defined in terms of minors of $W$ by
\begin{equation}
t_{i,j} = \det W[i+A,j+B],
\end{equation}
where $i+A=\{i+a_1,\ldots,i+a_k\}$ and $j+B = \{j+b_1,\ldots,j+b_k\}$ and where $1 \leq i \leq m-a_k$ and $1 \leq j \leq n - b_k$.
\end{definition}

There is a connection between minor matrices and log-concavity.  Consider the following example:

\begin{example}
Let $A=B=\{0,1\}$ and let $W=(w_{i,j})$ be $n \times n$. The $(A,B)$-minor matrix of $W$ is the $(n-1) \times (n-1)$ matrix $T=(t_{i,j})$ whose entries are consecutive $2 \times 2$ minors of $W$ where
\[
t_{i,j}=\det W[\{i,i+1\},\{j,j+1\}] = w_{i,j}w_{i+1,j+1}-w_{i,j+1}w_{i+1,j}.
\]

In particular, if we are given a sequence $\{a_{m}\}_{m=0}^{n-1}$ of numbers, and we set $w_{i,j}=a_{j-i}$, then the $(A,B)$-minor matrix has entries $t_{i,j}=a_{j-i}^{2}-a_{j-i-1}a_{j-i+1}$.  These are the numbers which arise in the log-concavity definition.  It turns out that the the infinite log-concavity conjecture is equivalent to the assumption that if an expanded form of the matrix $W$ is TN, then the new matrix $T$ is also totally nonnegative.  This connection is spelled out more completely in \S\ref{Section:Open}.
\end{example}

With all of this terminology in place, we can now state the main theorem of the paper:

\begin{thm} \label{thm:MainTheorem}
Let $\Gamma$ be a planar network of order $n$ with weighted edges.  If $T$ is the $(A,B)$-minor matrix of the weight matrix of $\Gamma$, then every $2\times 2$ minor of $T$ is a polynomial in terms of the weights having no negative coefficients. In other words, every $2\times 2$ minor of $T$ is a subtraction-free expression in terms of the weights of $\Gamma$.
\end{thm}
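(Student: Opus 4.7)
The strategy is to apply Lindström's Lemma (Lemma~\ref{lemma:Lindstrom}) to each entry $t_{i,j} = \det W[i+A, j+B]$ and then to construct a weight-preserving bijection that cancels the negative contributions in the $2\times 2$ minor. Fixing row indices $i<i'$ and column indices $j<j'$ and writing $D := t_{i,j}t_{i',j'} - t_{i,j'}t_{i',j}$, the lemma yields
\[
D = \sum_{(F_1,F_2)}\omega(F_1)\omega(F_2) \;-\; \sum_{(G_1,G_2)}\omega(G_1)\omega(G_2),
\]
where $(F_1,F_2)$ ranges over pairs of vertex-disjoint families with $F_1$ going from sources indexed by $i+A$ to sinks indexed by $j+B$ and $F_2$ from $i'+A$ to $j'+B$, while in $(G_1,G_2)$ the sink sets are swapped. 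Each summand is a monomial in the edge-weights of $\Gamma$.

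Next, I plan to use planarity of $\Gamma$ to observe that every negative pair $(G_1,G_2)$ is forced to contain a cross-family crossing: under the counterclockwise source/sink ordering, $s_{i+a_\ell}$ lies strictly above $s_{i'+a_\ell}$ (since $i<i'$) while $t_{j+b_\ell}$ lies strictly above $t_{j'+b_\ell}$ (since $j<j'$), so by a Jordan-curve argument the path $g_\ell \in G_1$ from the higher source to the lower sink must cross the path $g_\ell' \in G_2$ from the lower source to the higher sink. In contrast, the positive pairs split naturally into those with $F_1\cup F_2$ vertex-disjoint and those with at least one cross-family crossing. The plan is then to exhibit a weight-preserving bijection, built by Lindström--Gessel--Viennot-style tail-swapping at canonical crossings, between the $(G_1,G_2)$-pairs and the crossing $(F_1,F_2)$-pairs. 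Once this cancellation is carried out, only the pairs $(F_1,F_2)$ with $F_1\cup F_2$ globally vertex-disjoint survive; their contribution is manifestly a polynomial with no negative coefficients, establishing the theorem and supplying the combinatorial interpretation promised in the abstract.

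The central technical obstacle is making the swap rule work. A single tail-swap exchanges only one pair of endpoints and generically maps a configuration into a ``mixed'' one whose source/sink multisets lie in neither of the two classes being compared, so the naive Lindström involution does not take $(G_1,G_2)$-pairs directly to $(F_1,F_2)$-pairs. Overcoming this requires either performing several coordinated swaps simultaneously---one for each index $\ell$, at a carefully chosen intersection of the paths with source indices $i+a_\ell$ and $i'+a_\ell$---or enlarging the combinatorial setting to include intermediate configurations whose signed contributions telescope. In either case one must verify that the resulting map produces vertex-disjoint families in the target class, preserves the monomial weight, and squares to the identity; the delicate choice of the canonical ``first'' crossing (for example the topmost or leftmost cross-family intersection compatible with the planar embedding) is where the planarity hypothesis is used most subtly and constitutes the heart of the proof.
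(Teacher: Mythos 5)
Your overall strategy is the same as the paper's: expand each entry $t_{i,j}$ via Lindstr\"om's lemma, observe that the negative terms of the $2\times 2$ minor correspond to pairs of families whose sink sets are ``crossed,'' and cancel them against positive terms by a weight-preserving swap of sinks. You also correctly identify, via planarity, that every negative pair must involve cross-family intersections. However, the proposal stops exactly at the point where the actual work lies: you name the construction of the swap map as ``the central technical obstacle'' and ``the heart of the proof,'' and then offer only two speculative directions (simultaneous coordinated tail-swaps, or a telescoping enlargement of the configuration space) without carrying either out. As written, this is a gap, not a proof. The specific difficulty you would hit is the one you half-acknowledge: to pass from a pair $(G_1,G_2)$ with sinks $(j'+B,\,j+B)$ to a pair with sinks $(j+B,\,j'+B)$ you must exchange all $k$ sink assignments at once, and after $k$ independent tail-swaps at chosen crossings there is no reason the $k$ new blue paths remain pairwise vertex-disjoint (likewise for red), nor is it clear how to handle edges shared by a blue and a red path, where ``the crossing point'' is not a single vertex. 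Verifying that a canonical choice of crossings yields a well-defined involution is precisely what is missing.

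The paper resolves this not by splicing path tails at crossing vertices but by a global recoloring device. It forms the colored union $\tilde{\Gamma}$ of the two families (with shared edges counted as dual-colored), partitions its edges into \emph{chains} (equivalence classes under the relation of sharing an initial or terminal vertex), and proves (Lemma~\ref{lemma:reversechaincolor}) that reversing the colors of an entire chain preserves the vertex-disjointness of each color class --- this is what your tail-swap idea cannot guarantee. A parity analysis (Lemmas~\ref{lemma:ChainFacts} and~\ref{lemma:crossingareeven}, via a ``depth'' invariant of chains) shows that the crossed configurations arising from negative terms are \emph{evenly chained}, and the Fundamental Lemma~\ref{lemma:fundamental} then shows that recoloring exactly the chains meeting the final edges swaps all sinks simultaneously, uniquely and reversibly, without disturbing any source. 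That chain machinery is the missing content of your proof; without it (or a worked-out substitute), the cancellation you assert is not established. A secondary issue: your closing claim that the surviving positive terms are exactly the pairs with $F_1\cup F_2$ globally vertex-disjoint is asserted without justification and is not what the paper's argument yields; the paper only shows the negative terms inject weight-preservingly into the positive ones.
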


The proof of this theorem is given in \S\ref{sec:Lindstrom} through \S\ref{sec:CompletionOfProof}.  This theorem is sharp, as we will give an example of a planar network of order $6$, for which one minor matrix has a $3\times 3$ minor which is negative.  However, computations suggests that placing extra conditions on $\Gamma$ may be sufficient to force all $(A,B)$-minor matrices to be TN.

\section{A Lemma of Lindstr\"{o}m} \label{sec:Lindstrom}

Lindstr\"om~\cite{Lindstrom} (and earlier, in another context, Karlin and McGregor \cite{KM}) showed that the weight matrix of a planar network is totally nonnegative. Conversely, every TN matrix is the weight matrix of some planar acyclic network with edges having positive real weights, which was first proved by Brenti \cite{Brenti} (see also \cite{Talaska}).  Since the proof of the main theorem in this paper both depends on and generalizes Lindstr\"om's lemma, we include it for the sake of completeness.

\begin{lemma}[Lindstr\"om] \label{lemma:Lindstrom}
The minors of the weight matrix $W$ of a planar network $\Gamma$ of order $n$ are subtraction-free expressions in the weights of the network. If the weights are positive real numbers, the weigh matrix is totally nonnegative. Furthermore, the $(I,J)$-minor of $W$ is equal to the sum of the weights of all vertex-disjoint families of paths from the source points indexed by $I$ to the terminal points indexed by $J$.
\end{lemma}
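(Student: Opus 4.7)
The plan is to prove the combinatorial formula (the third sentence), since the first two sentences follow immediately: once the minor is expressed as a sum of weights of vertex-disjoint path families, it is a polynomial in the edge weights with nonnegative integer coefficients, and hence is subtraction-free and nonnegative whenever the weights are.

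First I would expand $\det W[I,J]$ using the Leibniz formula and then expand each entry $w_{i_\ell,j_{\sigma(\ell)}}$ as a sum of path weights to obtain
\[
\det W[I,J] = \sum_{\sigma\in S_k}\sgn(\sigma)\sum_{\vec\pi}\prod_{\ell=1}^{k}\omega(\pi_\ell),
\]
where the inner sum runs over path families $\vec\pi=(\pi_1,\ldots,\pi_k)$ with $\pi_\ell$ going from $s_{i_\ell}$ to $t_{j_{\sigma(\ell)}}$. This rewrites the minor as a signed sum over pairs $(\sigma,\vec\pi)$ of a permutation and a compatible path family. The target formula is then equivalent to saying that all signed contributions cancel except for those coming from pairs $(\mathrm{id},\vec\pi)$ in which the paths are vertex-disjoint.

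The key step is to construct a sign-reversing, weight-preserving involution on the set of pairs $(\sigma,\vec\pi)$ that are \emph{not} of the desired form, i.e. pairs in which two paths share a vertex. Given such a pair, I would let $\ell$ be the smallest index such that $\pi_\ell$ meets some other path of $\vec\pi$, let $\ell'$ be the smallest index greater than $\ell$ for which $\pi_{\ell'}$ meets $\pi_\ell$, and let $v$ be the first vertex along $\pi_\ell$ lying on $\pi_{\ell'}$. I then swap the tails of $\pi_\ell$ and $\pi_{\ell'}$ past $v$, obtaining a new family $\vec\pi{\,'}$ together with the permutation $\sigma'=\sigma\cdot(\ell\;\ell')$. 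The multiset of edges used is unchanged, so $\prod\omega(\pi_\ell)$ is preserved, while $\sgn(\sigma')=-\sgn(\sigma)$. Checking that applying the same rule to $(\sigma',\vec\pi{\,'})$ returns the original pair—because the distinguished indices $\ell,\ell'$ and the distinguished vertex $v$ are recovered—shows this is an involution, so all such contributions cancel in pairs.

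Finally I would argue that the surviving terms have the desired form. If $\vec\pi$ is vertex-disjoint, then the planarity of $\Gamma$, together with the counterclockwise cyclic order $s_1,\ldots,s_n,t_n,\ldots,t_1$ of the boundary sources and sinks, prevents the paths from crossing, so they must connect $s_{i_\ell}$ to $t_{j_\ell}$ in the same relative order; this forces $\sigma=\mathrm{id}$, and each such family contributes $+\prod\omega(\pi_\ell)$. Conversely any $(\mathrm{id},\vec\pi)$ with $\vec\pi$ vertex-disjoint survives the involution. Summing these surviving contributions yields exactly the sum of weights of vertex-disjoint path families from $\{s_i:i\in I\}$ to $\{t_j:j\in J\}$, completing the proof. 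The most delicate point is verifying that the recipe for choosing $(\ell,\ell',v)$ produces a genuine involution; specifying the choice via the lexicographically first offending pair of indices and then the first shared vertex along $\pi_\ell$ makes this verification mechanical.
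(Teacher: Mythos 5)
Your overall strategy is exactly the one the paper uses (and the classical Lindstr\"om--Gessel--Viennot argument): expand the minor over pairs $(\sigma,\vec\pi)$, cancel the non-vertex-disjoint terms by a weight-preserving, sign-reversing tail swap, and use planarity and the boundary ordering of sources and sinks to see that the surviving terms carry $\sigma=\mathrm{id}$. The reduction of the first two sentences to the combinatorial formula, and the planarity argument for the survivors, are fine.

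The gap is in the one step you yourself flag as delicate: your rule for selecting the swap is not an involution. You choose $\ell'$ as the smallest index greater than $\ell$ such that $\pi_{\ell'}$ meets $\pi_\ell$ \emph{anywhere}, and only then pick $v$ as the first common vertex of $\pi_\ell$ and $\pi_{\ell'}$. After the tails of $\pi_\ell$ and $\pi_{\ell'}$ are exchanged past $v$, the new path $\pi_\ell'$ carries the old tail of $\pi_{\ell'}$; if some intermediate path $\pi_m$ with $\ell<m<\ell'$ met that tail but never met the original $\pi_\ell$, then in the new family the smallest index of a path meeting $\pi_\ell'$ is $m$, not $\ell'$, so applying your rule again swaps $\pi_\ell'$ with $\pi_m$ rather than undoing the first swap. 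Concretely: take three paths with $\pi_1$ disjoint from $\pi_2$, with $\pi_1$ meeting $\pi_3$ at a vertex $u$, and with $\pi_2$ meeting $\pi_3$ at a vertex $w$ that lies after $u$ on $\pi_3$; such configurations certainly occur in planar networks of order $n$. Your rule gives $(\ell,\ell',v)=(1,3,u)$, but after the swap it gives $\ell'=2$. The standard repairs are either to choose the distinguished vertex first --- let $v$ be the first vertex along $\pi_\ell$ lying on \emph{any} other path, and then let $\ell'$ be the least other index of a path through $v$ --- or to do what the paper does: perturb the network so that no two interior vertices lie on the same vertical line, take the \emph{rightmost} node at which two or more paths of the family meet (a quantity invariant under the swap, since the multiset of vertices used by the family is unchanged), and swap the two least-indexed paths through that node. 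With either of these choices the verification that the map is an involution really is mechanical; with yours it fails.
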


\begin{proof}
Since the $(I,J)$-minor of the weight matrix $W$ is the determinant of the weight matrix of the subgraph consisting of the paths from the sources indexed by $I$ to the sinks indexed by $J$, it suffices to prove the lemma in the case of the full weight matrix: $I=J=[n]$.

As before, we let $S_n$ denote the group of permutations of the set $[n]=\{1,\ldots,n\}$. Let $\calP_{\sigma}$ denote the set of families $\boldsymbol{\pi}=\boldsymbol{\pi}_{\sigma}=(\pi_1,\ldots,\pi_n)$ where $\pi_i$ is a path from $s_i$ to $t_{\sigma(i)}$. If $\boldsymbol{\pi} \in \calP_{\sigma}$, we will say that $\sgn(\boldsymbol{\pi})=\sgn(\sigma)$. Let $\calP$ be the set of all such path families:
\[
\calP = \bigcup_{\sigma \in S_n} \calP_{\sigma}.
\]
Let $\omega(\boldsymbol{\pi})=\prod_{i=1}^n \omega(\pi_i)$ denote the product of the weights of the paths in the family $\boldsymbol{\pi}$.  Recall that the $(i,j)$-entry of the weight matrix, denoted $w_{i,j}$ or $w(i,j)$, is the sum of the weights of all paths from source $s_i$ to sink $t_j$. Thus,
\begin{equation}   \label{eqn:detW}
\det W
= \sum_{\sigma \in S_n} \sgn(\sigma)\prod_{k=1}^n w(k,\sigma(k))
= \sum_{\sigma \in S_n} \sum_{\boldsymbol{\pi} \in P_\sigma}\sgn(\boldsymbol{\pi}) \omega(\boldsymbol{\pi}).
\end{equation}

We will show that the only non-canceling terms in the determinant correspond to vertex-disjoint path families associated with the identity permutation. Subdivide the set $\calP$ of path families in $\Gamma$ into three disjoint subsets as follows:
\[
\calP= \calP_{0} \cup \calP_{+} \cup \calP_{-},
\]
where
\begin{align*}
\calP_{0} & = \{ \boldsymbol{\pi} \in \calP \, : \, \text{$\boldsymbol{\pi}$ is vertex-disjoint} \}, \\
\calP_{-} & = \{ \boldsymbol{\pi} \in \calP \, : \, \text{$\boldsymbol{\pi}$ is not vertex-disjoint and $\sgn(\boldsymbol{\pi}) = -1$}\}, \\
\calP_{+} & = \{ \boldsymbol{\pi} \in \calP \, : \, \text{$\boldsymbol{\pi}$ is not vertex-disjoint and $\sgn(\boldsymbol{\pi}) = +1$}\}.
\end{align*}
Examples of path families in $\calP_{0}$, $\calP_{+}$, and $\calP_{-}$ are illustrated in Figure~\ref{fig:LindstromDecomposition}; the two paths in the path family are given by a dashed and solid line, respectively.  Note that path families in $\calP_{0}$ necessarily correspond to the identity permutation. Equation~\eqref{eqn:detW} becomes
\begin{equation} \label{eqn:PPP}
\det W =
\sum_{\boldsymbol{\pi} \in \calP_{0}} \omega(\boldsymbol{\pi})
-\sum_{\boldsymbol{\pi} \in \calP_{-}} \omega(\boldsymbol{\pi})
+\sum_{\boldsymbol{\pi} \in \calP_{+}} \omega(\boldsymbol{\pi}).
\end{equation}
We will establish a bijection between $\calP_{-}$ and $\calP_{+}$ that preserves weights. Thus equation~\eqref{eqn:PPP} will reduce to
\[
\det W =
\sum_{\boldsymbol{\pi} \in \calP_{0}} \omega(\boldsymbol{\pi}),
\]
proving the theorem.

\begin{figure}[ht]
\includegraphics{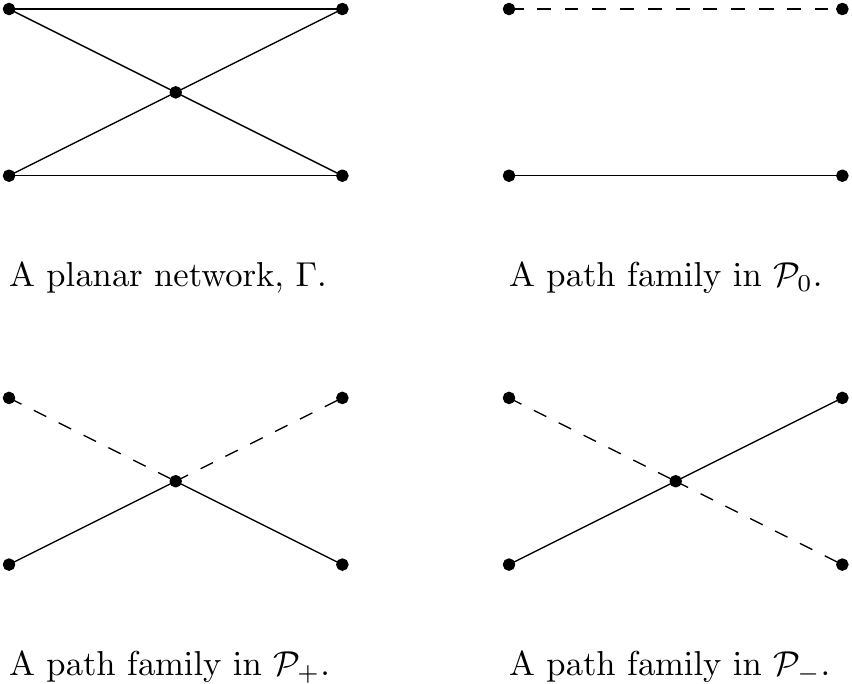}
\caption{Examples of path families in the different subsets of $\mathcal{P}$.}
\label{fig:LindstromDecomposition}
\end{figure}

By slightly perturbing the planar network if necessary, we can guarantee that no two vertices (apart from sources and sinks) lie on the same vertical line.  Let $\boldsymbol{\pi}=(\pi_1,\ldots,\pi_n)$ be a path family in $\calP$ that is not vertex-disjoint. Then there is a rightmost node at which at least two of the paths intersect. Let $i$ and $j$, with $i<j$, be the least two indices of paths $\pi_i$ and $\pi_j$ in $\boldsymbol{\pi}$ that intersect at this node. Form new paths $\pi'_i$ and $\pi'_j$ by interchanging the portions of $\pi_i$ and $\pi_j$ to the right of the rightmost intersection node. This gives a new path family
\[
\boldsymbol{\pi}'=(\pi_1,\ldots,\pi'_i,\ldots,\pi'_j,\ldots,\pi_n)
\]
such that
\[
\omega(\boldsymbol{\pi})=\omega(\boldsymbol{\pi}') \quad \text{and} \quad \sgn(\boldsymbol{\pi}) = - \sgn(\boldsymbol{\pi}').
\]

The mapping
\begin{align*}
\calP_{-} & \rightarrow \calP_{+} \\
\boldsymbol{\pi} & \mapsto \boldsymbol{\pi}'
\end{align*}
is a weight preserving bijection. An example of this path swapping construction and the bijection is illustrated in Figures~\ref{fig:LindstromPathSwap}. This proves the lemma.
\end{proof}
\begin{figure}[ht]
\includegraphics{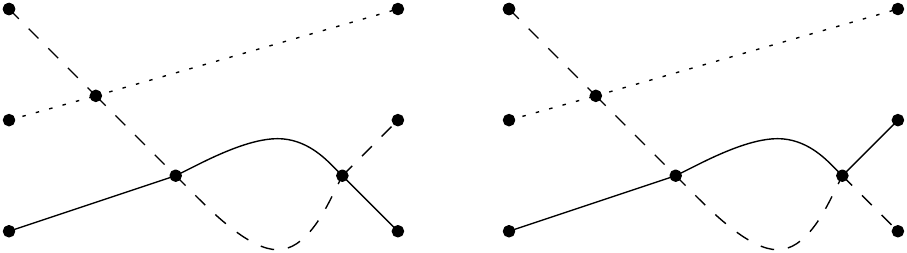}
\caption{An example of swapping a family from $\calP_{-}$ to $\calP_{+}$.  The three paths in the family are given by a dotted, dashed, and solid line, respectively.}
\label{fig:LindstromPathSwap}
\end{figure}

\section{A Fundamental Lemma}

In the proof of the previous lemma we saw that computing minors of the weight matrix involved calculating information involving path families inside the corresponding planar network.  The only path families which survived were those in $\mathcal{P}_{0}$, the vertex-disjoint path families.  Similarly, when considering the minors of minor matrices we will be led to consider families of path families.  To this end we introduce some relevant notation.

Fix a network $\Gamma$ of order $n$.  We will consider two path families living in $\Gamma$.  We think of each of the families as having a different color.  So we let
\begin{align*}
B & =\{\beta_1,\beta_{2},\ldots,\beta_k \} & \text{(colored blue),} \\
R & =\{\rho_1,\rho_2,\ldots,\rho_\ell\}   & \text{(colored red),}
\end{align*}
each be families of paths in $\Gamma$.  We will assume that the paths in $B$ are vertex-disjoint, and similarly the paths in $R$ will be vertex-disjoint.  Sometimes it will be important to emphasize the source and sink of a path.  The notation $\beta_i(a_i,b_i)$ indicates that the path $\beta_i$ begins at source $a_i$ and ends at sink $b_i$.  Thus, our families may be written
\begin{align*}
B& =\{\beta_{1}(a_{1},b_{1}), \beta_{2}(a_{2},b_{2}),\ldots, \beta_{k}(a_{k},b_{k})\}\\
R & =\{\rho_{1}(c_{1},d_{1}),\rho_{2}(c_{2},d_{2}),\ldots, \rho_{\ell}(c_{\ell},d_{\ell})\}.
\end{align*}
We order the paths in $B$ so that the sources of the paths in $B$ follow the natural order in $\Gamma$, and similarly for $R$.  In other words, $a_{1}<a_{2}<\ldots< a_{k}$ and $c_{1}<c_{2}<\ldots<c_{\ell}$.  It may be the case that a pair of paths $\beta_i$ and $\rho_j$ might share several common edges.  The source sets $\{a_1,\ldots,a_k\}$ and $\{c_1,\ldots,c_\ell\}$ are not required to be disjoint from each other, nor are the sink sets $\{b_1,\ldots,b_k\}$ and $\{d_1,\ldots,d_\ell\}$ required to be disjoint from each other.

We construct a certain modified and colored subnetwork of $\Gamma$, which we call $\tilde{\Gamma}$, as follows: First, take the union of the paths in $B$ and $R$ with their respective coloring.  Second, if a single edge of $\Gamma$ is dual-colored we will count this edge with multiplicity two.  Since it is difficult to visualize a dual-colored edge, in pictures we will replace this edge by two edges (without introducing any new intersections), and color the upper edge red and the lower edge blue; this is to enable us to see both colors in figures.  Third, we will slightly perturb our network if necessary so that no intersections, except perhaps at the sources and sinks, occur on the same vertical line.  Fourth, and finally, we remove any vertex which has only a single edge entering the vertex and a single edge exiting that vertex, and we combine those edges into a single edge.  In all subsequent pictures, blue paths will appear with thick lines, while red paths will appear with thin lines.  An example of this process is given in Figure~\ref{fig:ColoredSubGraph}, where our network $\Gamma$ is taken from Figure~\ref{fig:PlanarNetworkExample01}, our families $B$ and $R$ are singleton families involving only one path each. Notice that the last edge of the red path overlaps with the last edge of the blue path, and so we replace that edge with two separate edges (for the simple purpose of visualization).
\begin{figure}[ht]
\includegraphics{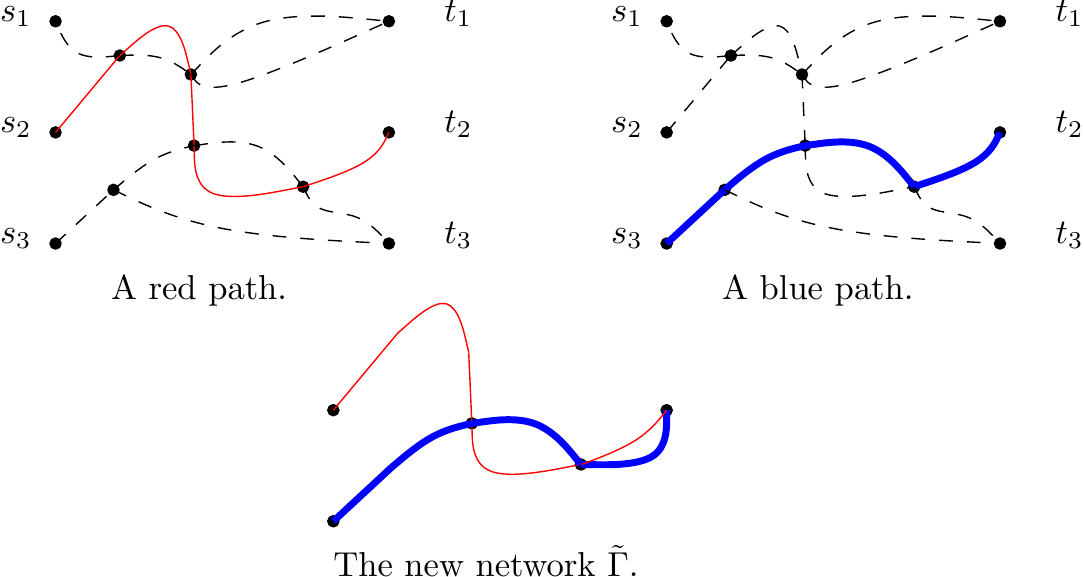}
\caption{Creating the new network $\tilde{\Gamma}$ from $\Gamma$.  The network $\Gamma$ appears with dashed lines.}
\label{fig:ColoredSubGraph}
\end{figure}

In Lemma~\ref{lemma:Lindstrom}, we were able to cancel all terms corresponding to odd permutations by creating a weight-preserving bijection from terms with negative sign to a subset of the paths with positive sign. This bijection is realized geometrically as a path-swap.  Similarly, we will need to swap the sinks of our colored path families.  We want to do so without affecting the sources of the paths, and we want the new families separately to be vertex-disjoint.  In particular, we want $B$ and $R$ to have the same number of paths (say $m$), and we want to be able to recolor edges, in an algorithmic and reversible way to obtain new families
\begin{align*}
B' & = \{\beta'_1(a_1,d_1),\ldots,\beta'_m(a_m,d_m)\}  && \text{(colored blue)}, \\
R' & = \{\rho'_1(c_1,b_1),\ldots,\rho'_m(c_m,b_m)\}    && \text{(colored red)},
\end{align*}
in which the terminal points of the two families of paths have been interchanged, but the set of all edges is the same as the set of edges in the original two families (so as to preserve weights).

Let $e$ be an edge in $\tilde{\Gamma}$, with initial point $s$ and terminal point $t$.  Clearly, if we recolor $e$ we must also recolor any other edge which has initial point $s$, or terminal point $t$; for if not then we will have two paths of the same color entering, or exiting from, a vertex.  With this in mind we make the following definitions:

\begin{definition}
\begin{enumerate}
\item Two different edges are \emph{strongly connected} if they both originate from, or both end in, a common vertex.  Thinking of a dual-colored edge as consisting of two over-lapping edges with different colors, we consider those two edges to be strongly connected to each other.

\item Let $\tilde{\Gamma}$ be a subnetwork of $\Gamma$ formed from the vertex-disjoint path families $B=\{\beta_1,\ldots,\beta_k\}$ and $R=\{\rho_1,\ldots,\rho_{\ell}\}$ as above.  A \textit{chain} in $\tilde{\Gamma}$ is an equivalence class of edges in $\tilde{\Gamma}$ under the reflexive and transitive closure of the strongly connected relation.  Figure~\ref{fig:Chains} gives an example of a colored network $\tilde{\Gamma}$ in which each of the edges in a chain are given the same number.
\end{enumerate}
\end{definition}
\begin{figure}[ht]
\includegraphics{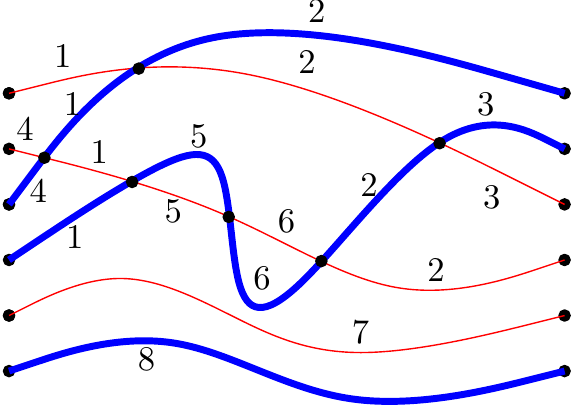}
\caption{Chains in a colored network $\tilde{\Gamma}$.}
\label{fig:Chains}
\end{figure}

Notice that you can travel along a chain by reversing direction and color every time you hit a vertex.

\begin{lemma} \label{lemma:reversechaincolor}
Let $\tilde{\Gamma}$ be the subnetwork of $\Gamma$ formed from (separately) vertex-disjoint path families $B=\{\beta_1,\ldots,\beta_k\}$ and $R=\{\rho_1,\ldots,\rho_{\ell}\}$, where $B$ is colored blue and $R$ is colored red.  Reversing the coloring of all edges in a chain of $\tilde{\Gamma}$ results in a colored network $\tilde{\Gamma}'$  which is the union of a vertex-disjoint blue path family and a vertex-disjoint red path family.
\end{lemma}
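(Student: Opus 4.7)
The plan is to analyze the local combinatorial structure of $\tilde{\Gamma}$ at each vertex and show that color-swapping along a chain preserves the property that each vertex has at most one in-edge and one out-edge of each color; this, together with acyclicity, will force the resulting colored edge-set to decompose into a vertex-disjoint blue family and a vertex-disjoint red family.

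First I would classify the vertices of $\tilde{\Gamma}$. Using that the fourth construction step has removed every vertex with exactly one in-edge and one out-edge, and that $B$ and $R$ are each separately vertex-disjoint, one shows that every internal vertex has \emph{exactly} two in-edges (one blue and one red) paired with \emph{exactly} two out-edges (one blue and one red). Indeed, a blue path that visits $v$ contributes one blue in-edge and one blue out-edge, and similarly for red; the alternative of only one path passing through $v$ would produce the forbidden one-in/one-out configuration. Sources carry $0$, $1$, or $2$ out-edges, with at most one of each color, and sinks are symmetric.

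The key local observation is that at any vertex $v$ all edges terminating at $v$ are pairwise strongly connected, and all edges originating at $v$ are pairwise strongly connected. Therefore a chain $C$ (an equivalence class of the transitive closure of strong connection) either contains the entire in-cluster at $v$ or none of it, and likewise for the out-cluster. Reversing the colors of the edges in $C$ consequently acts on the in-cluster at $v$ either trivially or by swapping its two colors; in either case the multiset of colors of in-edges at $v$ is unchanged. The same holds for out-edges. At a source with two out-edges the pair swaps together or not at all; at a source with a single out-edge the color may flip but the count stays at one, and sinks behave symmetrically.

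Finally, in the new coloring every vertex still has at most one blue in-edge and at most one blue out-edge, and analogously for red. Since $\Gamma$ is acyclic, the set of blue edges therefore decomposes into a disjoint union of simple paths; because at every internal vertex a blue in-edge is present iff a blue out-edge is present, each blue path begins at a source and terminates at a sink, and distinct blue paths are vertex-disjoint. The red family is handled identically. I expect the bulk of the work — and the main obstacle — to be the clean case analysis guaranteeing that the ``one blue and one red'' in/out structure really is preserved at every vertex type (internal, source, sink) under every possible pattern of intersection between the chain $C$ and the in- and out-clusters at that vertex.
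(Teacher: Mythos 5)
Your argument is correct and is essentially the paper's own proof: both rest on the local observation that at each vertex the incoming edges form a single chain-cluster (likewise the outgoing edges), so recoloring a chain either swaps or fixes the colors of each cluster wholesale and thus preserves the ``one blue, one red'' incidence structure at every vertex. Your write-up merely spells out the vertex classification and the final acyclicity/decomposition step in more detail than the paper does.
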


Before proving Lemma~\ref{lemma:reversechaincolor}, we caution the reader that, while recoloring a chain of $\tilde{\Gamma}$ preserves the vertex-disjointness property of each colored path families, in general it does not preserve the \textit{number} of blue paths or red paths, as illustrated by the following example.

\begin{example} \label{example:recoloringcounterexample}
In Figure~\ref{fig:Chains}, recoloring the chain numbered 8 increases the number of red paths, while decreasing the number of blue paths.  We leave it to the reader to show that such a recoloring results in a change in the number of paths of a certain color if and only if the chain being recolored has one endpoint which is a source, and another endpoint which is a sink.
\end{example}

\begin{proof}[Proof of Lemma~\ref{lemma:reversechaincolor}]
Let $v$ be a vertex of $\tilde{\Gamma}$ that is not a source nor a sink. Since the red and blue families are vertex-disjoint, to the left of $v$ there are two edges, one blue and one red, or there is a single bi-colored edge. Either way, the edges to the left of $v$ belong to the same chain. If the coloring of the edges of that chain is reversed, there continue to be one blue edge and one red edge or a single bi-colored edge. In other words, reversing the coloring of the chain preserves the number of red and blue edges that meet the vertex $v$ on the left. A similar argument applies to the edges that meet the vertex $v$ on the right. Thus recoloring the chain preserves the fact that there is a red path passing through $v$ and also a blue path passing through $v$.

Similarly, if $v$ is a source or sink attached to a blue edge and a red edge or a single bi-colored edge, recoloring the chain containing those edges preserves the number of blue and red edges touching $v$. So any source point of the new network touches at most one red edge and one blue edge.

It follows that the new network $\tilde{\Gamma}'$ obtained by reversing the colors of a chain is the union of a blue vertex-disjoint path family and a red vertex-disjoint path family.
\end{proof}

Now we need to introduce conditions on the graph which will guarantee that recoloring preserves the number of paths of any given color.

\begin{definition}
We will say that a chain in a graph is \emph{even} if it contains an even number of edges (counting multiplicity), otherwise it is \emph{odd}. A colored network is \emph{evenly chained} if every chain is even.  We say that a chain is a \emph{closed tour} if we can well-order the edges in the chain so that the $i$th edge is strongly connected to the $(i+1)$st edge, and the last edge is strongly connected to the first edge.  Note that a dual-colored edge is a closed tour.  Also, as is evidenced in Figure~\ref{fig:ClosedTour}, vertices can repeat as one performs the tour around such a chain.
\end{definition}

\begin{lemma}\label{lemma:ChainFacts}
Let $\tilde{\Gamma}$ be the subnetwork of $\Gamma$ formed as the union of vertex-disjoint path families $B=\{\beta_1,\ldots,\beta_k\}$ and
$R=\{\rho_1,\ldots,\rho_{\ell}\}$, where $B$ is colored blue and $R$ is colored red. Then
\begin{enumerate}
\item
Any even chain contains the same number of red source points as blue source points (counting multiplicities). Similarly, any even chain contains the same number of blue sink points as red sink points (counting multiplicities).

\item
Any closed tour is even.

\item
An even chain that is not a closed tour has endpoints of opposite color and these endpoints are both sources or both sinks.

\item
Any odd chain contains an odd number of source points (counting multiplicities) and also an odd number of sink points (counting multiplicities).  Since an odd chain is not a closed tour it has two endpoints. One endpoint is a source while the other is a sink. Both endpoints have the same color.
\end{enumerate}
\end{lemma}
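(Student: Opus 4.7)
My plan rests on two structural observations about an arbitrary chain $C$ of $\tilde\Gamma$, from which all four parts will follow. First, I will show that the colors of consecutive edges in $C$ must alternate: two strongly-connected same-color edges would meet at a common source or common terminal, forcing two paths of that color to intersect and violating vertex-disjointness of that color family. Second, I will observe that each edge $e \in C$ has at most one strongly-connected partner through its source vertex (an opposite-color edge leaving the same vertex, if any) and at most one through its terminal vertex, so the chain-edge graph has maximum degree $2$, and $C$ is either a simple cycle in the edge graph---a closed tour---or a simple edge-path. Part (2) is immediate from these two observations, since a cycle with alternating edge-colors must have even length.

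For the remaining parts I would consider $C$ to be a non-closed-tour chain, written as an edge-path $e_1, e_2, \ldots, e_n$ with consecutive edges strongly connected. Because each edge has a unique source and a unique terminal, the type of sharing alternates: if $e_i$ and $e_{i+1}$ are glued at a common source, then $e_{i+1}$ and $e_{i+2}$ are glued at a common terminal, and vice versa. Let $v_0$ be the vertex of $e_1$ not shared with $e_2$, and $v_n$ the vertex of $e_n$ not shared with $e_{n-1}$. The key intermediate claim I would prove is that each of $v_0$ and $v_n$ is a source or sink of $\Gamma$, with $v_0$ a source of $\Gamma$ when $v_0 = s(e_1)$ and a sink when $v_0 = t(e_1)$. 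The argument: if $v_0 = s(e_1)$, no other edge of $\tilde\Gamma$ has $v_0$ as its source (otherwise it would be strongly connected to $e_1$ and therefore lie in $C$, contradicting the choice of $v_0$), so no opposite-color path can exit $v_0$; were $v_0$ an interior vertex of $\Gamma$, the opposite-color path could not enter $v_0$ either (it would then have no legal continuation), so no opposite-color path would touch $v_0$ at all, and $v_0$ would be a degree-$1$-in-$1$-out vertex, contradicting the reduction step used in forming $\tilde\Gamma$. The case $v_0 = t(e_1)$ is symmetric.

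Combining these facts now yields (3), (4), and the counting assertions. For even $n$, the alternating sharing-type forces $v_0$ and $v_n$ to be of the same type with respect to their incident edges, so either both are sources of $\Gamma$ or both are sinks; color alternation along an even edge-path makes $e_1$ and $e_n$ opposite colors, which is exactly (3). For odd $n$, $v_0$ and $v_n$ are of opposite types, so one is a source of $\Gamma$ and the other a sink, while $e_1$ and $e_n$ share a color, giving the endpoint statement of (4). For the counting in (1) and (4), any interior shared vertex that is a source of $\Gamma$ contributes exactly one blue plus one red source point, because the two chain-edges glued there have opposite colors; the same holds for sinks. Hence interior contributions are always balanced between the two colors. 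In an even chain the two free endpoints together contribute either one blue plus one red source point or one blue plus one red sink point, preserving the balance and giving (1); in an odd chain the two free endpoints contribute a single source point and a single sink point, each in a single color, producing odd totals on top of the even interior contributions and giving (4). The principal obstacle is simply keeping the four sub-cases organized---even versus odd, crossed with source-type versus terminal-type of $v_0$---but in each case the argument reduces directly to the alternation principle.
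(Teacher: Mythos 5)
Your proposal is correct and follows essentially the same route as the paper: the alternation of colors and of gluing-types along a chain is precisely the paper's device of giving opposite-colored edges opposite directions, and you use the same two key facts (free endpoints must be sources or sinks because degree-$1$-in-$1$-out vertices were contracted in forming $\tilde{\Gamma}$, and interior sources/sinks of a chain count once in each color). You simply spell out the parity case analysis that the paper dismisses as ``simple statements about parity and direction.''
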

\begin{proof}
On any given chain think of the different colored edges as having opposite directions.  (This new direction is merely a tool, and is not to be confused with the fact that our network is directed from the left to the right).  As one travels from one edge in a chain that is strongly connected to another, one must reverse direction.  We can measure the parity in a chain by the number of direction changes.

Closed tours are even because if you leave a vertex $v$ in one direction, you end the chain by coming back to $v$ (on the same side) in the opposite direction.  Any chain which is not a closed tour has endpoints, which must be sources or sinks, since in the formation of $\tilde{\Gamma}$ we removed any vertices (except the sources and sinks) which had only one edge entering and exiting.

The rest of the lemma involves only simple statements about parity and direction.  It may be helpful to note that any source or sink in a chain which is not an endpoint of the chain is both a red and blue vertex, and thus counts an even number of times.
\end{proof}

\begin{figure}[ht]
\includegraphics{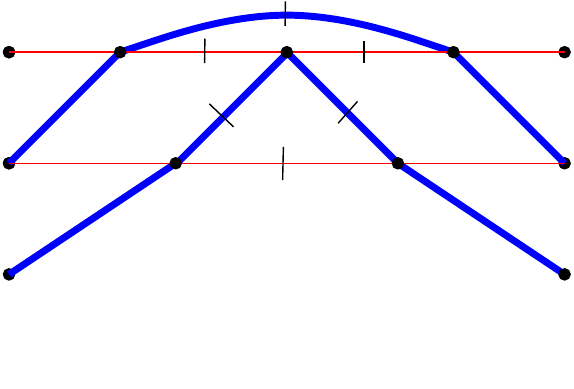}
\caption{An evenly chained colored network, with a single closed tour marked with tick marks.  Notice that as one travels around the chain, there are two loops formed in the underlying graph.}
\label{fig:ClosedTour}
\end{figure}

\begin{lemma}[Fundamental Lemma]  \label{lemma:fundamental}
Let $\tilde{\Gamma}$ be the subnetwork of $\Gamma$ formed from vertex-disjoint path families
\begin{align*}
B& =\{\beta_1(a_1,b_1),\ldots,\beta_m(a_m,b_m)\},  \\
R&=\{\rho_1(c_1,d_1),\ldots,\rho_m(c_m,d_m)\},
\end{align*}
and suppose $B$ and $R$ are evenly chained.
\begin{enumerate}
\item
If a chain in $\tilde{\Gamma}$ contains a source vertex, then the chain contains the same number of source vertices for red paths as it does for blue paths. Similarly, if the chain contains a sink vertex, then the chain contains the same number of sink vertices for red paths as it does for blue paths.
\item
Reversing the colorings of all of the edges in a chain of $\tilde{\Gamma}$ results in an evenly chained network $\tilde{\Gamma}'$ of blue path families and red path families.
\item
There is a unique way to recolor some final edges in both path families involving a minimal number of recoloring of edges in $\tilde{\Gamma}$ that results in a vertex-disjoint, evenly chained family of the form
\begin{align*}
B'& =\{\beta'_1(a_1,d_1),\ldots,\beta'_m(a_m,d_m)\},  \\
R'&=\{\rho'_1(c_1,b_1),\ldots,\rho'_m(c_m,b_m)\},
\end{align*}
in which the sink points of the blue and red families have been interchanged.
\end{enumerate}
\end{lemma}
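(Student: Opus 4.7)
The plan is to prove the three parts in order, using Lemma~\ref{lemma:ChainFacts} throughout and exploiting the key observation that the chain decomposition of $\tilde{\Gamma}$ is an invariant of its underlying uncolored graph. Part~(1) is then immediate from Lemma~\ref{lemma:ChainFacts}(1): since every chain is even by hypothesis, each chain contains equal numbers of blue and red source vertices, and likewise equal numbers of blue and red sink vertices.

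For Part~(2), notice that ``strongly connected'' is defined purely via shared initial or terminal vertices, so the partition of the edges of $\tilde{\Gamma}$ into chains does not depend on the coloring. Reversing the coloring along a single chain therefore yields a network $\tilde{\Gamma}'$ with exactly the same chain decomposition, and in particular every chain of $\tilde{\Gamma}'$ is still even. Lemma~\ref{lemma:reversechaincolor} ensures $\tilde{\Gamma}'$ is the union of vertex-disjoint blue and red families; and since no even chain has one source endpoint and one sink endpoint (Lemma~\ref{lemma:ChainFacts}(3)), the recoloring preserves the number of paths of each color, in contrast to the odd-chain scenario of Example~\ref{example:recoloringcounterexample}.

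Part~(3) carries the main content. By Lemma~\ref{lemma:ChainFacts}, each chain of $\tilde{\Gamma}$ is of one of three types: (i) a closed tour; (ii) an even chain with two source endpoints of opposite colors; or (iii) an even chain with two sink endpoints of opposite colors. Reversing a chain of type~(i) or (ii) leaves every sink unaffected, while reversing a chain of type~(iii) exchanges its unique-blue and unique-red sink endpoints. For existence, I would reverse every type~(iii) chain; by Part~(2) the resulting $\tilde{\Gamma}'$ is vertex-disjoint and evenly chained, with the blue sink multiset becoming $\{d_1,\ldots,d_m\}$ and the red sink multiset becoming $\{b_1,\ldots,b_m\}$. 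Relabeling the new paths by their sources yields families of the required form.

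For uniqueness and minimality, the main intermediate claim is that every valid recoloring --- that is, every recoloring of the edges of $\tilde{\Gamma}$ that produces a new pair of vertex-disjoint blue and red path families on the same edge set --- is the coordinated reversal of some union of whole chains. Locally, at every strong-connectivity group of edges, vertex-disjointness together with path continuity forces the colors of all edges in the group to swap together or not at all, so the reversal choice propagates through each chain as an all-or-nothing decision. Granting this, the only minimal chain-reversal recoloring achieving the stated sink swap is the reversal of exactly the type~(iii) chains: reversing any extra type~(i) or (ii) chain recolors edges without altering sinks and so is non-minimal, while omitting a type~(iii) chain leaves a unique-color sink endpoint that cannot appear in the target multiset (since unique-blue and unique-red sinks of $\tilde{\Gamma}$ are disjoint). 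The hardest step will be this propagation claim, which requires careful case-analysis at internal vertices with up to four adjacent edges and at source and sink vertices where a strong-connectivity group may contain only a single edge.
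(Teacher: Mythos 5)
Your proposal is correct and follows essentially the same route as the paper: the chains to reverse are exactly those with a sink endpoint (your type~(iii) chains), necessity comes from the all-or-nothing propagation of recoloring along a chain, and sufficiency from parts~(1) and~(2). The one detail you gloss over is why the resulting vertex-disjoint blue family pairs $a_i$ with $d_i$ specifically (rather than merely having $\{d_1,\ldots,d_m\}$ as its sink set), which the paper deduces from the planarity of $\Gamma$.
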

\begin{proof}
(1) As the families are evenly chained, all chains are even.  Thus, the claims about the number of sources and sinks follow from the previous lemma.

(2) Recoloring all edges in a chain does not change the number of edges in the chain, so the new network $\tilde{\Gamma}'$ is still evenly chained.

(3) When any (final) edge $e$ is recolored, then every other edge in the chain containing $e$ must be recolored, if we are going to preserve vertex-disjointedness.  Furthermore, to swap sinks in our colored families we must at least recolor any edge connected to a sink where that edge is the only one attached to the sink.  Thus, recoloring all chains containing edges attached to sinks, where the sink has only one edge attached, is necessary.  We now show that this is sufficient.

By the previous lemma, such a recoloring will not change the coloring of any source points (although it might interchange the colors of two paths both coming into the same source point).  By applying parts (1) and (2) finitely many times, we see that the resulting colored families will still be evenly chained, with the same number of paths in each family.  By construction, we have reversed the endpoints.  Further, from the fact that each family is still (separately) vertex-disjoint by Lemma~\ref{lemma:reversechaincolor}, and our graph is a subgraph of a planar graph of order $n$, the source points (in their original order) match the sinks in the manner specified in the statement of the lemma.
\end{proof}

If one wants to recolor \emph{all} final edges in both path families, and the associated chains, this also results in a new set of path families with the same properties as in item (3) above.  The only difference between these choices is whether or not one wants to recolor closed tours involving a sink.  (In the example below, such a closed tour is not recolored.  But it could be, if desired.)

\begin{example}
In Figure \ref{fig:MinimalRecolor} there are two graphs which are obtained from one another by a minimal recoloring of edges to preserve disjointness in a given family, but also swaps sinks between the families.

\begin{figure}[ht]
\includegraphics{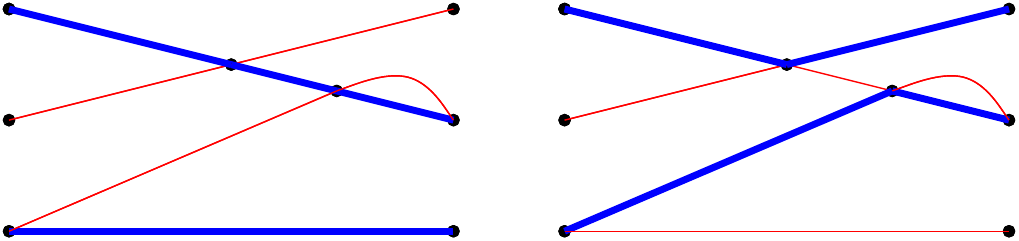}
\caption{A minimal recoloring to swap sinks.}
\label{fig:MinimalRecolor}
\end{figure}
\end{example}

Notice that if we try to recolor the sinks of families which are not evenly chained, we will necessarily have to change the color of some source point, by Lemma~\ref{lemma:ChainFacts} part (4).

We need one more graph theoretic result, which tells us that certain colored networks are necessarily evenly chained.  These graphs will correspond to the entries in a determinant attached to an odd permutation.

\begin{lemma}\label{lemma:crossingareeven}
Let $\tilde{\Gamma}$ be the subnetwork of $\Gamma$ formed from vertex-disjoint path families
\begin{align*}
B& =\{\beta_1(a_1,b_1),\ldots,\beta_m(a_m,b_m)\},  \\
R&=\{\rho_1(c_1,d_1),\ldots,\rho_m(c_m,d_m)\}.
\end{align*}
Further suppose that $a_{i}<c_{i}$ but $b_{i}>d_{i}$ for each $1\leq i\leq m$.  In other words, the path $\beta_{i}$ starts above the path $\rho_{i}$, but ends below it.  Then $\tilde{\Gamma}$ is evenly chained.
\end{lemma}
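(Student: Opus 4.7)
I would argue by contradiction using the planarity of $\Gamma$. Suppose $\tilde{\Gamma}$ is not evenly chained; by Lemma~\ref{lemma:ChainFacts}(4), there exists an odd chain $C$ whose two endpoints are a source $s_\alpha$ and a sink $t_\gamma$ of the same color. The two color cases are symmetric, so I assume both endpoints are blue: then $\alpha = a_{i_0}$ with $\alpha \notin \{c_1,\ldots,c_m\}$ (since $s_\alpha$ has only a blue edge attached), and $\gamma = b_{j_0}$ with $\gamma \notin \{d_1,\ldots,d_m\}$.

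Realize $C$ as a continuous arc $\Gamma_C$ in the planar disk $D$ containing $\Gamma$ by traversing each blue edge of $C$ left-to-right and each red edge right-to-left; consecutive edges of a chain share a common initial or terminal vertex, so $\Gamma_C$ is a continuous curve from $s_\alpha$ on the left boundary of $D$ to $t_\gamma$ on the right boundary. If $\Gamma_C$ revisits a vertex $v$, the two visits occur at the two separate ``u-turn flags'' on the incoming and outgoing sides of $v$; a local perturbation at each such $v$, permissible by the genericity convention of \S\ref{sec:Lindstrom} that no two interior vertices share a vertical line, converts $\Gamma_C$ into a simple arc. By the Jordan curve theorem, this arc separates $D$ into an upper region $U$ (bordered above by $s_1,\ldots,s_{\alpha-1}$ and $t_1,\ldots,t_{\gamma-1}$) and a lower region $L$.

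The contradiction comes from a mod-$2$ count of transverse crossings of $\Gamma_C$ with the individual paths $\beta_i$ and $\rho_i$. On the one hand, the hypothesis $a_i<c_i$ and $b_i>d_i$, combined with the planarity-forced orderings $b_1<\cdots<b_m$ and $d_1<\cdots<d_m$ of the blue and red sinks, puts the four boundary endpoints of the indexed pair $(\beta_i,\rho_i)$ in the cyclic order $s_{a_i},s_{c_i},t_{b_i},t_{d_i}$ along $\partial D$, so the blue pair $\{s_{a_i},t_{b_i}\}$ and red pair $\{s_{c_i},t_{d_i}\}$ interleave on the boundary. This interleaving, together with the positions of $\alpha$ and $\gamma$ relative to $a_i,b_i,c_i,d_i$, determines the total parity of crossings to be one value $N \pmod 2$. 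On the other hand, $\Gamma_C$ is itself assembled from segments of the same $\beta_i$'s and $\rho_i$'s in $\tilde{\Gamma}$, and a direct local analysis at each internal shared vertex (after the perturbation) recomputes the same total as $N+1 \pmod 2$, a contradiction.

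The main technical obstacle is this two-sided parity count, and in particular the careful distinction at shared vertices between transverse intersections and tangential (shared-edge) meetings of $\Gamma_C$ with $\beta_i \cup \rho_i$. The perturbation step turns every tangential meeting into a controlled number of transverse crossings, which is what makes the bookkeeping well-defined and yields the contradiction. The symmetric case in which both endpoints of $C$ are red is handled identically after interchanging the roles of $a,b$ with $c,d$.
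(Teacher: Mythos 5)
Your overall strategy (reduce via Lemma~\ref{lemma:ChainFacts}(4) to ruling out an odd chain joining a source to a sink, then derive a contradiction from planarity) is a reasonable one, and your setup is correct: an odd chain's endpoints are a monochromatic source $s_\alpha$ with $\alpha\in\{a_i\}\setminus\{c_j\}$ and a monochromatic sink $t_\gamma$ with $\gamma\in\{b_i\}\setminus\{d_j\}$, and the hypothesis $a_i<c_i$, $b_i>d_i$ does force the pairs $\{s_{a_i},t_{b_i}\}$ and $\{s_{c_i},t_{d_i}\}$ to interleave on $\partial D$. But the proof has a genuine gap exactly where you locate the ``main technical obstacle'': the two-sided parity count is asserted, not performed. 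You never compute $N$, never carry out the ``direct local analysis at each internal shared vertex,'' and never justify that the perturbed crossing number is independent of the perturbation. This last point is not a formality: the arc $\Gamma_C$ is built out of edges of the very paths $\beta_i,\rho_i$ you are intersecting it with, so almost all of its meetings with those paths are tangential (shared edges and shared vertices), and a mod-$2$ intersection count of two curves that overlap along arcs is not well defined until you specify a canonical resolution and prove its parity is invariant. Doing that honestly amounts to constructing a combinatorial invariant of chains --- which is the entire content of the lemma --- so as written the argument is a plan rather than a proof.

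For comparison, the paper avoids transversality issues altogether by defining a numerical \emph{depth} on each edge of $\tilde{\Gamma}$ (roughly, the signed count of paths of the opposite color lying above the edge, offset by the path's index) and checking by a local computation that strongly connected edges have equal depth, so depth is constant on chains. The hypotheses $a_i<c_i$ and $b_i>d_i$ force every edge incident to a source to have nonnegative depth and every edge incident to a sink to have negative depth; hence no chain contains both a source and a sink, and by Lemma~\ref{lemma:ChainFacts}(4) there are no odd chains. Your Jordan-curve picture is essentially the continuous shadow of this discrete invariant, and if you want to salvage your route, the cleanest fix is to replace the perturbation-and-crossing count by such an invariant (or, equivalently, to define your intersection parity via the paper's depth and verify the local conservation law at each vertex of a chain).
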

\begin{proof}
We introduce an auxiliary measure to each edge of a colored graph, which we call the \emph{depth} of an edge $e$, defined by
\[
{\rm depth}(e)=
\begin{cases}
i-k-1 & \begin{array}{l} \text{if $e$ belongs to the $i$th path in $B$}\\ \text{  and $k$ is the number of red paths strictly above $e$,}\end{array}\\
-i+k & \begin{array}{l} \text{if $e$ belongs to the $i$th path in $R$}\\ \text{  and $k$ is the number of blue paths on or above $e$.}\end{array}
\end{cases}
\]
By direct computation one finds that two edges which are strongly connected have the same depth.  Thus depth is an invariant of chains.  The assumptions of the lemma guarantee that all paths start with non-negative depth, but end with negative depth.  Thus no chain contains both a source and a sink, and hence the graph is evenly chained.
\end{proof}

One can view depth as a measure of how much one must perturb a graph where the blue and red paths alternate (with no intersections) to reach the given graph.

\section{Completion of the proof of the Main Theorem} \label{sec:CompletionOfProof}

\begin{proof}[Proof of Theorem~\ref{thm:MainTheorem}]
Let $\Gamma$ be a planar network of order $n$ with weighted edges, and let $W$ be the weight matrix of $\Gamma$.  Let $k\geq 1$, and let $A$ and $B$ be two subsets of $\{0,1,\ldots, n-1\}$ of cardinality $k$.  Let $T=(t_{i,j})$ be the $(A,B)$-minor matrix of $W$.

Lindstr\"{o}m's lemma tells us that $t_{i,j}$ is the sum of the weights of all vertex-disjoint path families from the sources $i+A$ to the sinks $j+B$, through the network $\Gamma$.  Let $\mathcal{P}_{0,i,j}$ denote the set of all such families. Let $C=\{c_{1},c_{2}\}$ be two indices of rows in $T$ with $c_{1}<c_{2}$, and let $D=\{d_{1},d_{2}\}$ be two indices of columns with $d_{1}<d_{2}$.  We want to show that $\det T[C,D]=t_{c_{1},d_{1}}t_{c_{2},d_{2}}-t_{c_{1},d_{2}}t_{c_{2},d_{1}}$ is a subtraction-free expression in the weights of $\Gamma$.

We view any element of $\mathcal{P}_{0,c_{1},\ast}$ as a blue path family, and elements of $\mathcal{P}_{0,c_{2},\ast}$ are red families.  A term in $\det T[C,D]$ involves a subtraction if and only if it corresponds to a term in $t_{c_{1},d_{2}}t_{c_{2},d_{1}}$.  Writing this product as a sum over weights over paths, a single term looks like $w(\boldsymbol{\pi}_{1})w(\boldsymbol{\pi}_{2})$ where $\boldsymbol{\pi}_{1}$ is a blue path family with sources $c_{1}+A$ and sinks $d_{2}+B$, and where $\boldsymbol{\pi}_{2}$ is a red path family with sources $c_{2}+A$ and sinks $d_{1}+B$.  By Lemma~\ref{lemma:crossingareeven}, this is an evenly chained colored network.  By our fundamental lemma, we may recolor edges to swap the sinks, in a unique and reversible way, and get another evenly chained colored network.  This network corresponds to a term in $t_{c_{1},d_{1}}t_{c_{2},d_{2}}$, and thus cancels our original term.
\end{proof}

In examining this proof, one might ask where it breaks down if we try to consider minors of the minor matrix of larger size.  Taking the determinant of a $3\times 3$ submatrix (for example) would correspond to a system of 3-colored vertex-disjoint path families.  Terms with negative sign would still correspond to one of the colored families ``crossing over'' another of the families; and so we can still swap sinks.  But it turns out that this interchanging is not a bijective action in that case.  Two different pairs of families might both be switchable.

\begin{example}\label{example:Order6CounterExample}
Consider Figure~\ref{fig:CounterExample} below.
\begin{figure}[ht]
\includegraphics{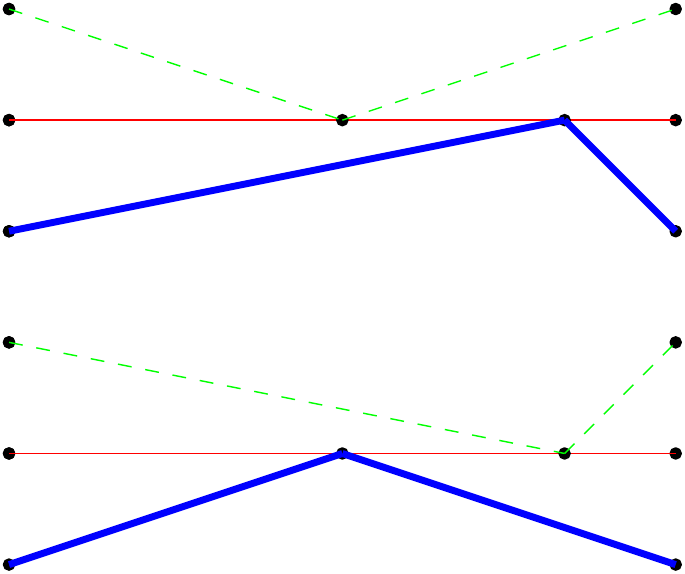}
\caption{A single planar network of order six, colored with 3 colors.  The colors are also represented by the thin, thick, and dashed lines, respectively.}
\label{fig:CounterExample}
\end{figure}

There are exactly three ways to recolor this network, without switching the color of any initial edge.  First, one can leave the diagram alone, and the coloring corresponds to the identity permutation between sources and sinks.  Second, one can switch the end edges of the green and red families (the dashed and thin lines) and obtain an odd permutation.  Third, one could instead switch the end edges of the red and blue families (the thin and thick lines) and also obtain an odd permutation.  There are more odd permutations than even ones.

If we give each edge weight 1, then the weight matrix is
\[
W=\begin{pmatrix}
1 & 1 & 1 & 0 & 0 & 0\\
1 & 1 & 1 & 0 & 0 & 0\\
0 & 1 & 1 & 0 & 0 & 0\\
0 & 0 & 0 & 1 & 1 & 0\\
0 & 0 & 0 & 1 & 1 & 1\\
0 & 0 & 0 & 1 & 1 & 1\\
\end{pmatrix}.
\]
The colors in the diagram imply that we should take $A=B=\{0,3\}$ and form the corresponding $(A,B)$-minor matrix.  If we do so, we obtain
\[
T=\begin{pmatrix}
1 & 1 & 0\\
1 & 1 & 1\\
0 & 1 & 1
\end{pmatrix}.
\]
One computes $\det(T)=-1$, which is the number of even permutations minus the number of odd permutations.

We note in passing that if arbitrary weights are given to the edges in the underlying network of order 6 then every minor of every minor matrix of the weight matrix is a subtraction-free expression in those weights, except for the determinant of the corresponding $3\times 3$ minor matrix that we constructed above.
\end{example}

\section{Open Problems}\label{Section:Open}

While Example~\ref{example:Order6CounterExample} tells us that arbitrary minor matrices of a totally nonnegative matrix no longer have to be totally nonnegative, the coloring on the graph is peculiar, in that the path families are interlaced.  We would like to thank Kelli Talaska for bringing to our attention the following example which shows that similar properties hold even when the families come from simple minor matrices.

\begin{example}
Let $\mathcal{L}$ be the operator on a matrix which gives the $(\{0,1\}\times\{0,1\})$-minor matrix.  Note that the colored path families in $\mathcal{L}(W)$ will consist of two paths whose sources are consecutive.

On page 14 of \cite{MS}, after Conjecture 7.4, is a TN matrix $A$ with $\mathcal{L}(A)$ not TN.  This is an example taken from \cite{FHMJ}.  One can construct a planar network which gives rise to $A$; and Figure~\ref{fig:IteratedNotTN} gives a simplified network with these same properties.

\begin{figure}[ht]
\includegraphics[scale=.5]{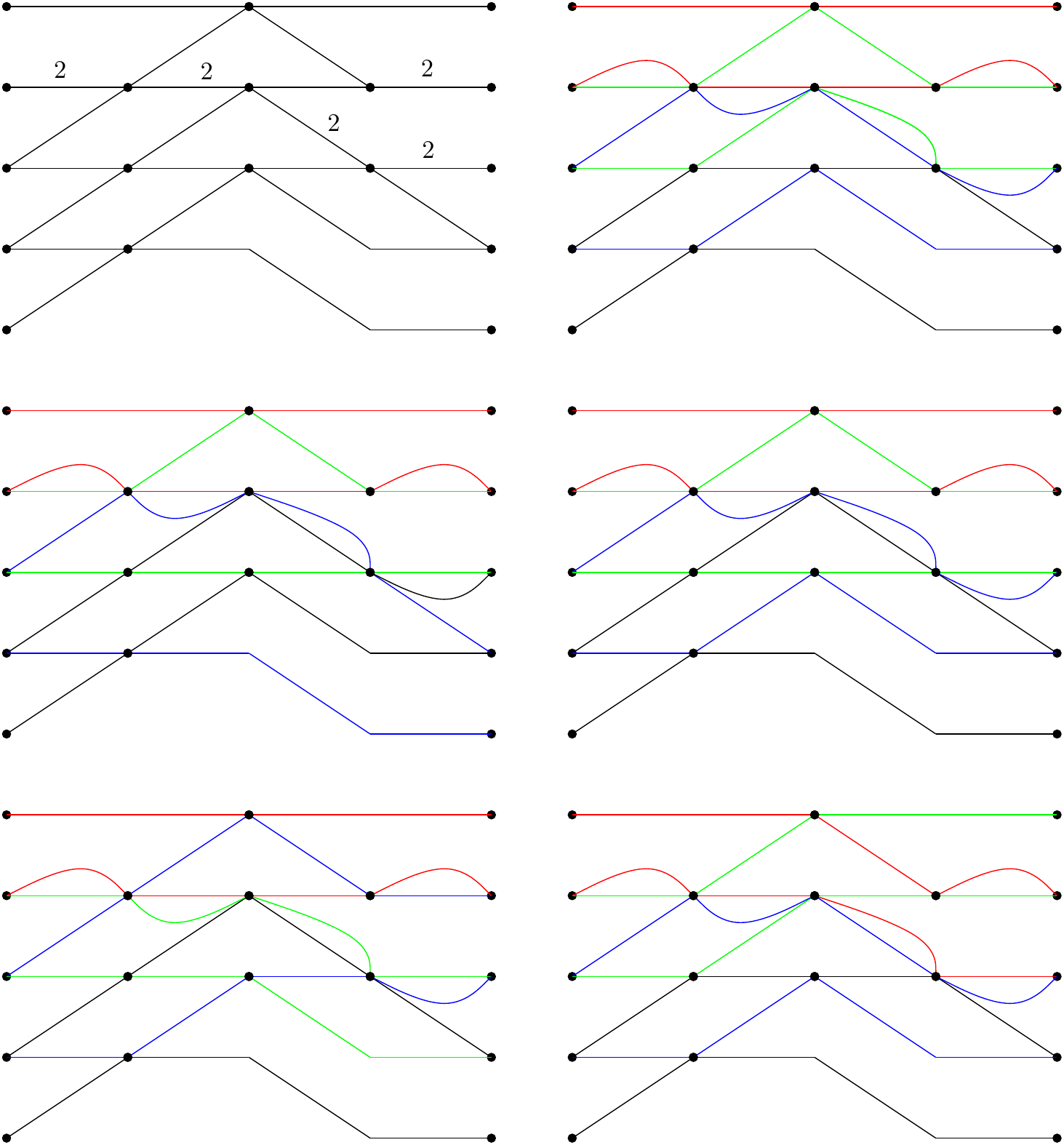}
\caption{A network which can be colored so that the even permutations are outnumbered by the odd permutations.  The $2$'s are on segments which will be double colored (represented by the curvy paths in the later diagrams).  Sources and sinks of any color are consecutive.}
\label{fig:IteratedNotTN}
\end{figure}

If arbitrary weights are assigned to each edge in the network, and $W$ is the corresponding weight matrix, then one can show that each of the minors of $\mathcal{L}(W)$ is subtraction-free in terms of the weights, except the determinant of $\mathcal{L}(W)$.  Furthermore, the same is true if we iterate the $\mathcal{L}$-operator.  So, while the main result of this paper implies that if $A$ is a TN matrix then $\mathcal{L}^{2}(A)$ is nonnegative, this example shows that $\mathcal{L}^{4}(A)$ can be negative.
\end{example}

The infinite log-concavity conjecture is equivalent to showing that the $(A,B)$-minor matrix of $W$, where $A=\{0,1\}$ and $B=\{0,1\}$, is a TN matrix when $W$ arises from a very special network related to a real polynomial with only negative roots.  A prototypical example of such a network is given in Figure~\ref{fig:sequence-network}.  The matrix $W$ will be a Toeplitz matrix.  Intuitively, the infinite log-concavity conjecture should have a purely combinatorial proof which looks at subtraction-free expressions, rather than only an analytic proof relying on properties of the real numbers.

\begin{figure}[ht]
\includegraphics[scale=1]{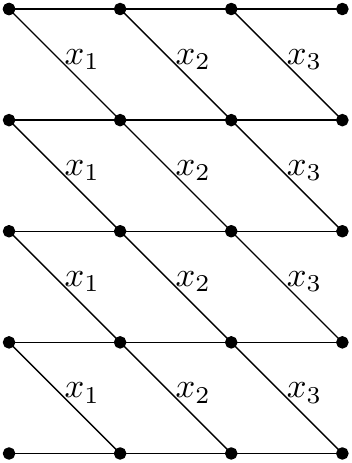}
\caption{A network with three ``columns'' arising from a polynomial with three roots.  More columns can be added if we consider polynomials with more than three roots.}
\label{fig:sequence-network}
\end{figure}

We pose the following conjecture:

\begin{conjec}
Let $W$ be the weight matrix for a planar network of the general form given in Figure~\ref{fig:sequence-network} (but with an arbitrary number of rows and columns).  If $T$ is a matrix formed from $W$ by iterating minor-matrix constructions then all minors of $T$ are subtraction-free expressions in the weights of the planar network.
\end{conjec}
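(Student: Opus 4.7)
The plan is to establish a stronger inductive statement: at every intermediate stage of iterating the minor-matrix construction, the resulting matrix is itself realizable as the weight matrix of some acyclic planar network whose edge weights are subtraction-free polynomials in the original edge weights of $\Gamma$. Once such a realizability statement is in place, Lindström's lemma (Lemma~\ref{lemma:Lindstrom}) applied to the new network at each stage immediately produces subtraction-free expressions for all of its minors. Thus the conjecture reduces to a structural claim about how minor-matrix constructions interact with the particular planar networks arising from the sequence-network in Figure~\ref{fig:sequence-network}.

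First I would write down $W$ explicitly. The sequence-network produces a (lower) triangular Toeplitz-type matrix whose entries are elementary symmetric polynomials in the edge weights grouped by columns of the network, and it admits a bidiagonal factorization $W = L_1 L_2 \cdots L_m$ in which each $L_i$ corresponds to one column of $\Gamma$ (adding a column amounts to left-multiplication by a bidiagonal nonnegative matrix). This factorization is nothing other than the graphical realization of the column-by-column construction of $\Gamma$, and it suggests the correct invariant to maintain along the iteration: at every stage the current matrix should admit a planar network realization built column-by-column from simple bidiagonal pieces, with each new column expressible in subtraction-free fashion from the previous data.

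The main inductive step would be to prove that if $M$ is the weight matrix of such a column-factored planar network $\Gamma_M$, then the $(A,B)$-minor matrix $\mathcal{M}_{A,B}(M)$ is the weight matrix of another planar network $\Gamma'_M$ of the same column-factored type. The construction should assign, to each entry of $\mathcal{M}_{A,B}(M)$ (which by Lindström is a sum over vertex-disjoint path families in $\Gamma_M$), a collection of single paths in a compound network $\Gamma'_M$ built column-by-column from the factorization of $\Gamma_M$. Passing each column $L_i$ through the construction would amount to analyzing how one bidiagonal step acts on $k$-tuples of vertex-disjoint paths with consecutive sources and consecutive sinks, which is a concrete and local problem; if this local translation can be performed uniformly in $A$ and $B$, the iteration then produces $\Gamma'_M$, $\Gamma''_M$, and so on, each inheriting the column-factored structure.

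The hard part will be actually constructing the compound network $\Gamma'_M$ and verifying that it remains planar and column-factored. Example~\ref{example:Order6CounterExample} shows that without extra structure no such compound network can exist even for a single $3\times 3$ minor of a minor matrix; what must rescue the conjecture for sequence-networks is the rigidity forced by the consecutive source and sink indices that arise whenever a minor matrix is built from a column-factored matrix, together with the repeated linear-factor structure of each column of $\Gamma$. A reasonable first test of the framework is the iteration of the $\mathcal{L}$-operator corresponding to the McNamara--Sagan--Stanley infinite log-concavity conjecture: a successful column-by-column construction in that case would recover Brändén's theorem by purely combinatorial means and would give strong evidence that the column-factored inductive invariant is the right one to pursue for the full conjecture.
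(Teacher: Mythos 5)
This statement is posed in the paper as an open conjecture; the authors offer no proof, only the remark that they verified many cases by symbolic computation, and their own Theorem~\ref{thm:MainTheorem} covers only $2\times 2$ minors of a single minor matrix. So there is no ``paper's proof'' to compare against, and the real question is whether your proposal constitutes a proof. It does not. Your strategy --- maintain as an inductive invariant that each iterated minor matrix is itself the weight matrix of a column-factored planar network with subtraction-free polynomial edge weights, then invoke Lindstr\"om's lemma at each stage --- is a sensible reduction, and if the invariant held it would indeed yield the conjecture (and more). But the entire content of the conjecture is concentrated in the step you defer: constructing the compound network $\Gamma'_M$ and showing that the minor-matrix construction preserves realizability. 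You acknowledge this is ``the hard part'' and give no construction, no local analysis of how a bidiagonal factor acts on $k$-tuples of vertex-disjoint paths, and no argument for why the sequence-network structure evades the obstruction exhibited by Example~\ref{example:Order6CounterExample} and by the $\mathcal{L}^4$ example in \S\ref{Section:Open}. Note also that your invariant is essentially a restatement of what must be proved: by Brenti's theorem (cited in \S\ref{sec:Lindstrom}), realizability of $T$ as a weight matrix with positive real weights is equivalent to $T$ being TN for each specialization, and asking for a single network with \emph{subtraction-free polynomial} weights uniform in the original indeterminates is if anything stronger than the conjecture. So the reduction trades the conjecture for an equally open structural claim.

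What you have written is a research program, not a proof, and it should be presented as such. The concrete gap is the absence of the local translation lemma: given a bidiagonal factor $L_i$ and the family of $k$-element consecutive row/column index sets, you would need an explicit planar gadget whose path sums reproduce the corresponding minors, together with a verification that these gadgets concatenate planarly across columns and that the resulting network again has the column-factored form needed to continue the induction. Until that lemma is stated and proved, nothing in the proposal rules out the failure mode the paper itself documents, namely that a determinant of a minor matrix can count more odd permutations than even ones.
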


We were led to pose this conjecture after having verified through symbolic computation that the result holds for a large number of columns and rows, and for many iterations of the minor matrix construction.  The following special case would give a new proof of the infinite log-concavity result.

\begin{question}
If $W$ is the weight matrix for a planar network of order $n$, of the general form given in Figure~\ref{fig:sequence-network}, are the minors of $\mathcal{L}(W)$ subtraction-free expressions in those weights?
\end{question}

\section*{Acknowledgements}

We wish to thank Kelli Talaska for pointing out the example in the previous section, and for other useful and insightful comments on an earlier draft of the paper.  We also thank the referee for providing helpful comments which improved the paper.


\bibliographystyle{amsplain}
\bibliography{TNN_minor_bib}

\end{document}